\theoremstyle{theorem}
\newtheorem{theorem}{Theorem}
\newtheorem{lemma}{Lemma}
\DeclareMathOperator{\card}{\text{card}}
\DeclareMathOperator{\qqand}{\qquad\text{and}\qquad}
\newcommand{\N}{\ensuremath{ \mathbf N }}
\newcommand{\Z}{\ensuremath{\mathbf Z}}
\newcommand{\beq}{\begin{equation}}
\newcommand{\eeq}{\end{equation}}
\newcommand{\benum}{\begin{enumerate}}
\newcommand{\eenum}{\end{enumerate}}
\title{Sums of Finite Sets of Integers, II}
\author{Melvyn B. Nathanson}
\address{Lehman College (CUNY), Bronx, NY 10468}
\email{melvyn.nathanson@lehman.cuny.edu}
\subjclass[2010]{Primary 11B13.  Secondary 11B34, 11D07}
\keywords{Additive number theory, sumsets, representation functions.}
\begin{document}
\maketitle

\begin{abstract}
A fundamental result in additive number theory  states that, 
for every finite set $A$ of integers, the $h$-fold sumset $hA$ 
has a very simple and beautiful structure for all sufficiently large $h$.  
Let $(hA)^{(t)}$ be the set of all integers in the sumset  $hA$ that have at least 
$t$ representations as a sum of $h$ elements of $A$.  
It is proved that the set $(hA)^{(t)}$ has a similar structure.  
\end{abstract}

\section{Structure of sumsets.}
G.  H. Hardy and E. M. Wright~\cite[p. 361]{hard-wrig08} clearly stated 
the general problem of additive number theory.  
\begin{quotation}

\noindent
Suppose that $A$ or 
$
a_1, a_2,a_3,\ldots
$
 is a given system of integers.  Thus $A$ might contain all 
the positive integers, or the squares, or the primes.  We consider all representations of 
an arbitrary positive integer $n$ in the form 
\[
n = a_{i_1} + a_{i_2} + \cdots + a_{i_s},
\]
\ldots. We denote by $r(n)$ the number of  such representations.  Then what can we say about $r(n)$?
\end{quotation}
Many classical problems are still unsolved.  For example, we do not know what numbers 
are sums of four cubes.   

Much recent work concerns sums of arbitrary sets of integers.  
The \textit{$h$-fold sumset} of a set $A$ of integers is the set $hA$ consisting 
of all integers that can be represented as the sum 
of $h$ not necessarily distinct elements of $A$.  Additive number theory studies $h$-fold sumsets.  
For every finite or infinite set $A$ of integers, we would like to know the structure 
of the sumsets $hA$ for small $h$ 
and, asymptotically, as $h$ goes to infinity.  
A fundamental theorem of additive number theory, published 50 years ago 
in~\cite{nath1972-7,nath1996bb}, explicitly   
solves the asymptotic problem for finite sets of integers.  

Define the \textit{interval of integers} 
$[u,v] = \{n \in \Z: u \leq n \leq v\}$.
For every set $D$ and integer $w$, let $w-D = \{w-d:d\in D\}$.

\begin{theorem}           \label{FiniteSets:theorem:MBN-1}
Let $A = \{a_0,a_1,\ldots, a_k\} $ be a finite set of integers such that    
\[
0 = a_0 < a_1 < \cdots < a_k 
\qqand
\gcd(A) = 1.
\]
Let 
\[
h_1 = (k-1)(a_k-1)a_k + 1.
\]
There are nonnegative integers $c_1$ and $d_1$ 
and finite sets $C_1$ and $D_1$ with 
\[
C_1 \subseteq  [0,c_1 -2]  \qqand D_1 \subseteq [0,d_1 -2]
\]
such that 
\[
hA = C_1  \cup [c_1,ha_k -d_1] \cup \left(ha_k  - D_1 \right)  
\]
for all $h \geq h_1$.  
\end{theorem}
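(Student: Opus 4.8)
The plan is to reduce the two-sided statement to a one-sided problem at the left end of $hA$ and to exploit the reflection $A\mapsto a_k-A$ to recover the right end for free. Set $A^{*}=a_k-A=\{0,a_k-a_{k-1},\ldots,a_k-a_1,a_k\}$. This set again has $k+1$ elements, least element $0$ and greatest element $a_k$, and since $\gcd(a_k,a_k-a_1,\ldots,a_k-a_{k-1})=\gcd(a_k,a_1,\ldots,a_{k-1})=\gcd(A)=1$ one has $\gcd(A^{*})=1$. Because $0\in A$, reflection commutes with summation: $hA^{*}=ha_k-hA$. Consequently every structural statement proved for the left end of $hA^{*}$ transfers, by subtracting from $ha_k$, to the right end of $hA$, and it is enough to understand the part of $hA$ near $0$.

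The next step is to translate membership into a semigroup weight problem. Let $S=\langle a_1,\ldots,a_k\rangle$ be the numerical semigroup of nonnegative integer combinations of the nonzero elements of $A$. Since $0\in A$, one has $n\in hA$ precisely when $n=\sum_{i=1}^{k}x_ia_i$ for some $x_i\ge 0$ with $\sum_{i=1}^{k}x_i\le h$, the remaining summands being $a_0=0$; equivalently $n\in S$ and the least number of parts in a representation of $n$ (its \emph{weight}) is at most $h$. As $\gcd(A)=1$, the semigroup $S$ has a finite Frobenius number $g$, so $[g+1,\infty)\subseteq S$, and I set $c_1=g+1$ and $C_1=S\cap[0,g]$. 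The crucial estimate is a uniform bound on weights. For each residue $r$ modulo $a_k$ let $\phi(r)$ be the least element of $S$ in that class; no representation of $\phi(r)$ can use $a_k$ (else $\phi(r)-a_k\in S$ lies in the same class and is smaller), and if some coefficient of an $a_j$ with $j<k$ were at least $a_k$ we could subtract $a_ja_k$ and remain in $S$ in the same class, again contradicting minimality. Hence $\phi(r)=\sum_{i=1}^{k-1}y_ia_i$ with every $y_i\le a_k-1$, so its weight is at most $(k-1)(a_k-1)$. For $n\ge c_1$ in class $r$ we have $n\ge\phi(r)$, and representing $n=\phi(r)+\frac{n-\phi(r)}{a_k}\,a_k$ gives a representation of weight at most $(k-1)(a_k-1)+n/a_k$. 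Writing $W=(k-1)(a_k-1)$, this proves
\[
[\,c_1,\ (h-W)a_k\,]\subseteq hA .
\]

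Applying the identical construction to $A^{*}$ yields a Frobenius number $g^{*}$; I define $d_1=g^{*}+1$ and $D_1=S^{*}\cap[0,g^{*}]$, and reflect through $ha_k$ to obtain the complementary inclusion $[\,Wa_k,\ ha_k-d_1\,]\subseteq hA$ together with the identification of the top piece as $ha_k-D_1$. It then remains to verify, for $h\ge h_1$, three bookkeeping facts. First, $C_1=hA\cap[0,c_1-1]$ is independent of $h$: every element of $S\cap[0,g]$ has weight at most $g<h_1\le h$, while $g=c_1-1\notin S$ forces $C_1\subseteq[0,c_1-2]$; the symmetric statements hold for $D_1$. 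Second, the endpoints are ordered so that the three pieces are pairwise disjoint, which follows from $c_1,d_1\le Wa_k<h_1$. Third, and this is where the hypothesis enters, the two displayed intervals overlap: since $h\ge h_1=Wa_k+1$ gives $h-W\ge W$, we get $(h-W)a_k\ge Wa_k$, so their union is the full interval $[\,c_1,\ ha_k-d_1\,]$ and the decomposition follows.

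The main obstacle is the weight estimate of the second step: the exchange argument bounding the weight of each residue minimum by $W=(k-1)(a_k-1)$ is the technical heart, and it is this bound---together with the fact that weight is converted into reach only in steps of size $a_k$, so that the left interval extends no further than $(h-W)a_k$---that dictates the explicit threshold $h_1=Wa_k+1=(k-1)(a_k-1)a_k+1$ at which the two intervals first meet. The remainder is the routine but careful verification that $C_1$ and $D_1$ stabilize for $h\ge h_1$ and that the three pieces fit together without gaps.
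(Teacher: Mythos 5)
Your proof is correct. Note first that the paper does not prove Theorem~1 in isolation: it is the case $t=1$ of Theorem~2, so the relevant comparison is with that argument. The technical core of your proof --- reducing modulo $a_k$, showing that the minimal element $\phi(r)$ of each residue class of the numerical semigroup $S=\langle a_1,\dots,a_k\rangle$ is a sum of at most $W=(k-1)(a_k-1)$ of the generators $a_1,\dots,a_{k-1}$, and converting this weight bound into $[c_1,(h-W)a_k]\subseteq hA$ --- is the same estimate as the paper's Lemma~2, there phrased by reducing the coefficients of an arbitrary integral representation of $n$ into $[0,a_k-1]$ rather than by minimality in the Ap\'ery set. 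Where you genuinely diverge is the surrounding architecture. The paper defines $c_1,d_1$ by maximally extending the guaranteed interval inside $h_1A$ and then runs an induction on $h$, using $hA+A\subseteq(h+1)A$ plus a counting argument to show that no new fringe elements can appear; you instead identify the fringes once and for all as $C_1=S\cap[0,g]$ and $D_1=S^*\cap[0,g^*]$, observe that every such element has weight at most $g<h_1$ and hence lies in $hA$ for every $h\ge h_1$, and use the reflection $A\mapsto a_k-A$ (which the paper records only separately, in its Symmetry section) to get the top fringe for free. Your route avoids the induction entirely, produces the exact value $c_1=g+1$ (Frobenius number plus one) at the outset rather than as an afterthought, and halves the work by duality. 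The only step you assert without proof is $c_1,d_1\le Wa_k$, but this follows in one line from your own coefficient bound, since $g=\max_r\phi(r)-a_k\le(a_k-1)\sum_{j=1}^{k-1}a_j-a_k<Wa_k$; so there is no genuine gap.
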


Thus, for $h$ sufficiently large, the sumset $hA$ consists of a long interval of consecutive 
integers, with a small initial fringe $C_1$ and small terminal fringe $ha_k-D_1$.  
This structure is rigid.  In the sumset $(h+1)A$, the  length of the interval increases by $a_k$, 
the initial fringe $C_1$ is unchanged, and the terminal fringe translates to the right by $a_k$.  

The integer $FN_1(A) = c_1-1$ is the \textit{Frobenius number} of the set $A$, 
that is, the largest number that cannot be represented as a nonnegative integral
linear combination of elements of $A$. 
This is  often presented as the \textit{Frobenius coin problem}:  Find the largest amount that cannot 
be obtained using only coins with denominations $a_1,\ldots, a_k$.

Smaller values for the number $h_1$ have been obtained 
by Wu, Chen, and Chen~\cite{wu-chen-chen11}, 
Granville and   Shakan~\cite{gran-shak20}, 
and Granville and   Walker~\cite{gran-walk20}. 

Let $B$ be a finite set of integers with $|B| \geq 2$.
If $\min(B) = b_0$ and $\gcd(B-b_0) = d$, then the ``normalized set'' 
\[
A = \left\{ \frac{b-b_0}{d}: b \in B \right\} 
\]
is a finite set of nonnegative integers with $\min(A) = 0$ and $\gcd(A) = 1$.  
We have 
\[
hB =  hb_0 + \{dx: x \in hA\}.
\]
for all positive integers $h$.  
Thus, Theorem~\ref{FiniteSets:theorem:MBN-1} describes the asymptotic structure 
of the sumsets of every finite set of integers.  

Han, Kirfel, and Nathanson~\cite{nath1998-92} 
extended Theorem~\ref{FiniteSets:theorem:MBN-1} to linear forms 
of finite sets of integers.
Khovanski{\u\i}~\cite{khov92,khov95} and Nathanson~\cite{nath2000-98} 
proved the exact polynomial growth of sums of finite sets of lattice points, 
and, more generally, of linear forms of finite subsets of any additive abelian semigroup.

\section{Representation functions.}
Let $A$ be a set of integers.  
For every positive integer $h$, the \textit{$h$-fold representation function} 
$r_{A,h}(n)$ counts the number of representations of $n$ 
as the sum of $h$ elements of $A$.  Thus, 
\[
r_{A,h}(n) 
= \card\left\{ (a_{j_1},\ldots, a_{j_h}) \in A^h : n = \sum_{i=1}^h a_{j_i} 
\text{ and }  a_{j_1} \leq \cdots \leq a_{j_h}  \right\}.
\]
Equivalently, if $\N_0^{A}$ is the set of all sequences of nonnegative integers indexed 
by the elements of  $A$, then 
\[
r_{A,h}(n)  =\card \left\{ (u_a)_{a\in A} \in \N_0^{A}:   \sum_{a\in A} u_a a = n \text{ and } 
\sum_{a\in A} u_a = h\right\}.
\]

For every positive integer $t$, let $(hA)^{(t)}$ be the set of all integers $n$ 
that have at least $t$ representations as the sum of $h$ elements of $A$, that is, 
\[
(hA)^{(t)} = \{n \in \Z : r_{A,h}(n) \geq t \}.
\]
The following result completely determines the structure of the sumsets $(hA)^{(t)}$ for all $t$
and for all sufficiently large $h$.

\begin{theorem}           \label{FiniteSets:theorem:MBN-2}
Let $k \geq 2$, and let $A = \{a_0,a_1,\ldots, a_k\} $ be a finite set of integers such that    
\[
0 = a_0 < a_1 < \cdots < a_k 
\qqand
\gcd(A) = 1.
\]
For every positive integer $t$,  let 
\[
h_t = (k-1)(ta_k-1)a_k + 1.
\]
There are nonnegative integers $c_t$ and $d_t$ 
and finite sets $C_t$ and $D_t$ with 
\[
C_t \subseteq  [0,c_t -2]  \qqand D_t \subseteq [0,d_t -2]
\]
such that 
\[
(hA)^{(t)} = C_t  \cup [c_t,ha_k -d_t] \cup \left(ha_k  - D_t \right) 
\]
 for all $h \geq h_t$.
\end{theorem}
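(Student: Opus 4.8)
The plan is to reduce everything to Theorem~\ref{FiniteSets:theorem:MBN-1} (the case $t=1$) by counting representations through their \emph{positive} parts. Since $a_0 = 0$, a representation of $n$ as a sum of $h$ elements of $A$ is a representation of $n$ as a sum of $m \leq h$ positive elements of $A$ (that is, of $a_1,\dots,a_k$) together with $h-m$ copies of $0$. Hence $r_{A,h}(n)$ equals the number of representations of $n$ as a sum of at most $h$ positive elements of $A$; for fixed $n$ this is nondecreasing in $h$ and stabilizes, once $h \geq n$, at the total number $p(n)$ of representations of $n$ by positive elements of $A$ (finitely many, as each part is at least $a_1 \geq 1$). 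Because $k \geq 2$ there are two distinct positive elements $a_1 < a_k$, and with $\gcd(A)=1$ one gets $p(n) \to \infty$, so $p(n) \geq t$ for all large $n$; this is exactly what fails when $|A| = 2$. I would also record the reflected set $A' = a_k - A$, which again satisfies the hypotheses and gives $r_{A,h}(n) = r_{A',h}(ha_k - n)$, so that the terminal fringe of $(hA)^{(t)}$ is the mirror image of the initial fringe of $(hA')^{(t)}$; this reduces the work to the bottom of the sumset.

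The central interval is filled by a \emph{swap move}: replacing $a_k$ copies of $a_1$ by $a_1$ copies of $a_k$ together with $(a_k - a_1)$ copies of $0$ preserves both the sum and the number of summands. Given $n$ in a long central range, I would first produce one representation of $n$ in $hA$ using $a_1$ at least $L := (t-1)a_k$ times. Setting aside $L$ copies of $a_1$, this amounts to $n - La_1 \in (h-L)A$, which Theorem~\ref{FiniteSets:theorem:MBN-1} guarantees whenever $c_1 \leq n - La_1 \leq (h-L)a_k - d_1$ and $h - L \geq h_1$. The definition of $h_t$ is calibrated precisely for this: a short computation gives $h_t - (t-1)a_k \geq h_1$, so $h \geq h_t$ forces $h-L \geq h_1$. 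Applying the swap move $j = 0,1,\dots,t-1$ times to such a representation then produces $t$ distinct representations of $n$ (they differ in the number of copies of $a_1$), whence $r_{A,h}(n) \geq t$. This shows $(hA)^{(t)}$ contains the interval $[\,c_1 + La_1,\; ha_k - L(a_k-a_1) - d_1\,]$.

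For the fringes I would define the constants directly from the stabilized counts. Let $c_t - 1$ be the largest $n \geq 0$ with $p(n) < t$, and put $C_t = \{\,n \in [0,\,c_t - 2] : p(n) \geq t\,\}$; define $d_t$ and $D_t$ analogously from the reflected set $A'$. These are independent of $h$, which is what yields the rigidity in the statement: as $h \mapsto h+1$ the initial fringe is fixed, while the interval and terminal fringe translate by $a_k$. For $n \leq h$ the stabilization $r_{A,h}(n) = p(n)$ identifies the bottom of $(hA)^{(t)}$ with $\{\,n : p(n) \geq t\,\}$ in that range, namely $C_t$ together with the initial segment of $[c_t, ha_k - d_t]$; the reflection does the same at the top with $d_t$ and $D_t$.

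The main obstacle is to show that the three descriptions---the bottom region $n \leq h$, the central interval above, and the reflected top region $ha_k - n \leq h$---together cover all of $[c_t, ha_k - d_t]$ with no gaps and agree on overlaps, as soon as $h \geq h_t$. Concretely I must check that $c_1 + La_1 \leq h+1$ and $L(a_k - a_1) + d_1 \leq h+1$, so the central interval meets the bottom and top regions; both quantities are of size $O((t-1)a_k^2)$ and follow from $h \geq h_t = (k-1)(ta_k-1)a_k + 1$ together with $k \geq 2$. Carrying out this overlap bookkeeping, and confirming that the constants $c_t, d_t, C_t, D_t$ extracted from $p$ and $A'$ remain consistent for every $h \geq h_t$, is the careful technical core; once it is complete, the displayed decomposition of $(hA)^{(t)}$ follows.
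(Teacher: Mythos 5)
Your proposal is correct, but it takes a genuinely different route from the paper. The paper produces the $t$ representations of an integer $n$ in the central interval directly, by choosing, for each $s\in[1,t]$, residue representatives $x_{j,s}\in[(s-1)a_k,sa_k-1]$ of a fixed solution modulo $a_k$ (Lemma~\ref{FiniteSets:lemma:CD-1}); it then defines $c_t,d_t,C_t,D_t$ by inspecting $(h_tA)^{(t)}$ at the base case $h=h_t$ and propagates the structure by induction on $h$ using $(hA)^{(t)}+A\subseteq((h+1)A)^{(t)}$ (Lemma~\ref{FiniteSets:lemma:inclusion}). You instead treat Theorem~\ref{FiniteSets:theorem:MBN-1} as a black box, manufacture the $t$ representations by the swap $a_k\cdot a_1=a_1\cdot a_k+(a_k-a_1)\cdot 0$ applied to a representation of $n-La_1\in(h-L)A$ with $L=(t-1)a_k$, and replace the induction by two structural observations: the stabilization $r_{A,h}(n)=p(n)$ for $n\leq h$, and the duality $r_{A,h}(n)=r_{A',h}(ha_k-n)$ for $A'=a_k-A$. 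I checked the arithmetic you defer: $h_t-h_1=(k-1)(t-1)a_k^2\geq L$, and $c_1+La_1$ and $L(a_k-a_1)+d_1$ are both at most $h_1-1+(t-1)a_k^2\leq h_t-1\leq h$, so the bottom region $[0,h]$, your central interval, and the reflected top region do overlap and cover $[0,ha_k]$; also $p(n)\geq t$ for $n\geq c_1+La_1$ (take $h$ large in your central-interval argument), which makes your $c_t$ well defined with $c_t\leq c_1+La_1$, and likewise for $d_t$. What your approach buys is an intrinsic, $h$-free description of the fringes --- $C_t$ and $c_t$ are read off from the stabilized count $p(n)$, which makes the rigidity under $h\mapsto h+1$ transparent and matches the paper's closing remark that $FN_t(A)=c_t-1$ is the largest integer without $t$ representations as a sum of elements of $A$; what the paper's route buys is self-containedness (its Lemma~\ref{FiniteSets:lemma:CD-1} with $t=1$ essentially reproves the interval part of Theorem~\ref{FiniteSets:theorem:MBN-1}) and a shorter verification, at the cost of defining the fringes only through the base case $h=h_t$. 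Both arguments work with the same threshold $h_t$.
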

  
It is remarkable that the sumsets $(hA)^{(t)}$ have the same structure as the sumset $hA$.

\section{Proof of Theorem~\ref{FiniteSets:theorem:MBN-2}.}    \label{FiniteSets:section:TheoremProof}
The proofs of the following lemmas are in Section~\ref{FiniteSets:section:LemmaProofs}.  

\begin{lemma}                         \label{FiniteSets:lemma:inclusion}
 Let A\ be a set of integers.  
For all positive integers $h$ and $t$, 
\[
(hA)^{(t)} + A \subseteq    \left( (h+1)A\right)^{(t)}. 
\]
\end{lemma}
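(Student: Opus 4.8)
The plan is to show that translating by a fixed element of $A$ sends every representation counted by $r_{A,h}$ to a representation counted by $r_{A,h+1}$, and that this assignment is injective, so that the number of representations cannot decrease. The inclusion then follows at once from the defining inequalities $r_{A,h}(n)\geq t$ and $r_{A,h+1}(n+a)\geq t$.

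First I would fix an integer $n \in (hA)^{(t)}$ and an element $a \in A$. Since $(hA)^{(t)} + A$ is, by definition, the union of the singletons $\{n+a\}$ over all such pairs, it suffices to prove that $n+a \in \left((h+1)A\right)^{(t)}$, that is, $r_{A,h+1}(n+a)\geq t$. To produce representations of $n+a$ from those of $n$, I would work with the description
\[
r_{A,h}(n) = \card\left\{(u_b)_{b\in A} \in \N_0^A : \sum_{b\in A} u_b b = n \text{ and } \sum_{b\in A} u_b = h\right\}
\]
and define a map sending a representation $(u_b)_{b\in A}$ of $n$ to the tuple $(v_b)_{b\in A}$ with $v_a = u_a + 1$ and $v_b = u_b$ for $b \neq a$. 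A direct check shows $\sum_{b\in A} v_b b = n + a$ and $\sum_{b\in A} v_b = h+1$, so each $(v_b)_{b\in A}$ is a representation counted by $r_{A,h+1}(n+a)$.

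Finally I would observe that this map is injective: it simply adds the fixed unit vector supported at $a$, and is inverted by subtracting that same vector. Hence $r_{A,h+1}(n+a) \geq r_{A,h}(n) \geq t$, which gives $n+a \in \left((h+1)A\right)^{(t)}$ and therefore the claimed inclusion. The argument is short, and the only point requiring any care—the nearest thing to an obstacle—is the injectivity claim, namely that distinct representations of $n$ must yield distinct representations of $n+a$. This is immediate precisely because the increment applied to each input is the same fixed vector, so no two distinct preimages can collapse to a common image.
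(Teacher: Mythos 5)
Your proof is correct and follows essentially the same route as the paper: both add the unit vector supported at the chosen element $a$ to each representation counted by $r_{A,h}(n)$, observe that distinctness is preserved, and conclude $r_{A,h+1}(n+a)\geq r_{A,h}(n)\geq t$. No gaps.
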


\begin{lemma}                          \label{FiniteSets:lemma:CD-1}
Let $k \geq 2$ and let $A = \{a_0,a_1,\ldots, a_k \}$ be a  finite set of integers  with   
\[
0 = a_0 < a_1 < \dots < a_{k} \qqand \gcd(A) = 1. 
\] 
For every positive integer $t$, let 
\beq                                                    \label{FiniteSets:Ct}
c'_t  = (ta_{k}-1)\sum_{j=1}^{k-1}  a_j
\eeq
and 
\beq                                                    \label{FiniteSets:Dt}
d'_t =   (k-1) (t a_{k} -1) a_{k}.
\eeq
For every positive integer $h$, 
\beq                                                    \label{FiniteSets:CDinterval-0}
[c'_t, h a_{k}  - d'_t] \subseteq (h A)^{(t)}.
\eeq
\end{lemma}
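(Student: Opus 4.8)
The plan is to exploit the hypothesis $a_0 = 0$ to recast the representation function combinatorially. Since one may pad any representation with copies of $0$, the number $r_{A,h}(n)$ of representations of $n$ as a sum of exactly $h$ elements of $A$ equals the number of tuples $(u_1,\dots,u_k) \in \N_0^{k}$ with $\sum_{j=1}^{k} u_j a_j = n$ and $\sum_{j=1}^{k} u_j \le h$: a tuple with $\sum_j u_j < h$ is completed to a representation of length $h$ by adjoining $h - \sum_j u_j$ zeros, and distinct tuples give distinct representations. So it suffices to exhibit, for each $n \in [c'_t, ha_k - d'_t]$, at least $t$ distinct such tuples.

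First I would construct one base representation by working modulo $a_k$. Because $\gcd(a_1,\dots,a_{k-1},a_k) = \gcd(A) = 1$, the residues of $a_1,\dots,a_{k-1}$ generate the group $\Z/a_k\Z$, so reducing a B\'ezout solution modulo $a_k$ yields coefficients $v_1,\dots,v_{k-1} \in [0, a_k -1]$ with $\sum_{j=1}^{k-1} v_j a_j \equiv n \pmod{a_k}$. Setting $m = (n - \sum_{j=1}^{k-1} v_j a_j)/a_k$ then gives the representation $n = \sum_{j=1}^{k-1} v_j a_j + m a_k$; here $\sum_{j=1}^{k-1} v_j a_j \le (a_k-1)\sum_{j=1}^{k-1} a_j$, which together with the lower bound $n \ge c'_t$ guarantees $m \ge 0$ and pins down the size of $m$.

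To manufacture the remaining representations I would iterate the length-changing swap coming from the identity $a_k \cdot a_1 = a_1 \cdot a_k$: replacing $a_1$ copies of $a_k$ by $a_k$ copies of $a_1$ preserves the value $n$ while sending $(u_1, u_k) \mapsto (u_1 + a_k, u_k - a_1)$. Performing this swap $s$ times, for $s = 0,1,\dots,t-1$, produces $t$ tuples whose coefficients of $a_k$ are the distinct values $m - s a_1$, hence $t$ distinct representations of $n$.

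The hard part will be the bookkeeping that forces both constraints to hold simultaneously for exactly the stated thresholds $c'_t$ and $d'_t$, and this is where the shape of those numbers is used. On one side, $n \ge c'_t = (ta_k - 1)\sum_{j=1}^{k-1} a_j$ should give $m a_k \ge (t-1)a_k \sum_{j=1}^{k-1} a_j$, whence $m \ge (t-1)a_1$ and all $t-1$ swaps are legal. On the other side, $n \le ha_k - d'_t$ with $d'_t = (k-1)(ta_k-1)a_k$ should bound the largest part count, namely that of the tuple after $t-1$ swaps, $\sum_{j=1}^{k-1} v_j + m + (t-1)(a_k - a_1)$, by $h$; this is where the hypothesis $k \ge 2$ enters, making the net coefficient of $t-1$ in the resulting estimate nonpositive. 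Carrying out these two estimates and confirming $\sum_j u_j \le h$ for every one of the $t$ tuples completes the argument.
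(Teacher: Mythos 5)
Your argument is correct, and its skeleton is the same as the paper's: reduce a B\'ezout representation of $n$ modulo $a_k$ to get a base representation with the coefficients of $a_1,\dots,a_{k-1}$ in $[0,a_k-1]$, use $n\ge c'_t$ to make the coefficient of $a_k$ nonnegative, and use $n\le ha_k-d'_t$ to keep the total number of parts at most $h$ (padding with copies of $a_0=0$). Where you genuinely diverge is in how the $t$ distinct representations are produced. The paper takes, for each $s\in[1,t]$, the residue representatives of \emph{all} of $x'_1,\dots,x'_{k-1}$ in the shifted window $[(s-1)a_k,sa_k-1]$, so distinctness is immediate from disjoint windows and the length estimate gives $\sum_i x_{i,s}\le h-(k-1)(t-s)a_k$. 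You instead fix the single base representation and iterate the two-element exchange $(u_1,u_k)\mapsto(u_1+a_k,u_k-a_1)$, getting distinctness from the $a_k$-coefficients $m,m-a_1,\dots,m-(t-1)a_1$. Your bookkeeping does close with the stated constants: $n\ge c'_t$ gives $ma_k\ge(t-1)a_k\sum_{j=1}^{k-1}a_j$, hence $m\ge(t-1)a_1$ and all swaps are legal; and the worst part count is
\[
\sum_{j=1}^{k-1}v_j+m+(t-1)(a_k-a_1)\le(k-1)(a_k-1)+\bigl(h-(k-1)(ta_k-1)\bigr)+(t-1)(a_k-a_1)=h-(t-1)\bigl((k-2)a_k+a_1\bigr)\le h,
\]
using $k\ge2$ exactly as you predicted. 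Your variant touches only two coordinates, which makes the distinctness and legality checks slightly cleaner; the paper's window-shifting version is more symmetric in $a_1,\dots,a_{k-1}$ and in fact shows your lower threshold could be relaxed below $c'_t$, since you only need $m\ge(t-1)a_1$ rather than $m\ge(t-1)\sum_{j=1}^{k-1}a_j$. Either way the lemma follows; do remember to dispose of the trivial case $ha_k<c'_t+d'_t$, where the interval is empty.
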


We now prove Theorem~\ref{FiniteSets:theorem:MBN-2}.

\begin{proof}
Let $t$ be a positive integer.  Define $c'_t$ by~\eqref{FiniteSets:Ct} and $d'_t$ by~\eqref{FiniteSets:Dt}. 
By Lemma~\ref{FiniteSets:lemma:CD-1}, the set $ (h_t A)^{(t)}$ contains 
the  interval $[c'_t, h_t a_{k}  - d'_t] $.
Let $c_t$ and $d_t$ be the smallest integers such that 
\[
[c'_t, h_t a_{k}  - d'_t] \subseteq [c_t, h_t a_{k}  - d_t] \subseteq (h_t A)^{(t)}.
\]
Thus, $c_t \leq c'_t$ and $d_t \leq d'_t$.  It follows that 
\[
c_t-1 \notin  (h_t A)^{(t)} \qqand h_t a_{k}  - d_t +1 \notin  (h_t A)^{(t)}.
\]
Define  the finite sets $C_t$ and $D_t$ by 
\[
C_t = [0, c_t-1] \cap  (h_t A)^{(t)} 
\]
and 
\[
h_t a_k - D_t =  [ h_t a_k - d_t + 1, h_t a_k] \cap  (h_t A)^{(t)}.  
\]
This gives 
\[
(h_t A)^{(t)}= C_t \cup [c_t, h_t a_k - d_t] \cup (h_t a_k-D_t).
\]
We shall prove that 
\beq                             \label{FiniteSets:induction}
(h A)^{(t)} = C_t \cup [c_t, h a_k - d_t] \cup (h a_k-D_t)
\eeq
for all $h \geq h_t$.

The proof is by induction on $h$.  
Assume that~\eqref{FiniteSets:induction} is true for some $h \geq h_t$.
Because $\{0,a_k\} \subseteq A$,  Lemma~\ref{FiniteSets:lemma:inclusion} gives  
\beq                                                                                                \label{FiniteSets:hAinclusion}
(hA)^{(t)}  \cup \left((hA)^{(t)} + a_k \right)  \subseteq  (hA)^{(t)} + A \subseteq    \left( (h+1)A\right)^{(t)} 
\eeq
and so 
\[
C_t \subseteq (h A)^{(t)}  \subseteq \left( (h+1)A\right)^{(t)}. 
\]
Because $c'_t \leq d'_t = h_t -1 \leq h-1$ and $a_k \geq 2$, we have 
\[
c_t + d_t  \leq c'_t  + d'_t \leq 2d'_t \leq a_k(h_t-1) \leq a_k(h-1). 
\]
Therefore, 
\[
c_t + a_k  \leq ha_k -d_t 
\]
and
\[
 [c_t, c_t + a_k] \subseteq  [c_t, h a_k - d_t] \subseteq  (h A)^{(t)} 
 \subseteq \left( (h+1)A\right)^{(t)}.  
\]
By~\eqref{FiniteSets:hAinclusion},  
\begin{align*}
 [c_t+a_k, (h+1) a_k - d_t] & =  a_k + [c_t, h a_k - d_t] \\
 &  \subseteq a_k + (hA)^{(t)} \\
&  \subseteq ((h+1)A)^{(t)} 
\end{align*}
and
\begin{align*}
(h+1) a_k-D_t & =  a_k + \left( h a_k-D_t \right)  \\
& \subseteq  a_k + (hA)^{(t)}  \\
&  \subseteq ((h+1)A)^{(t)}.  
\end{align*}
Therefore,
\[
B^{(t)} = C_t \cup [c_t, (h+1)a_k - d_t] \cup ( (h+1) a_k-D_t) \subseteq  ((h+1)A)^{(t)}. 
\]
We must prove that $B^{(t)} =  ((h+1)A)^{(t)}$. 

We have $A \subseteq [0,a_k]$ and 
\[
((h+1)A)^{(t)} \subseteq  (h+1)A \subseteq (h+1)[0,a_k]  = [0,(h+1)a_k].  
\]
Thus, if $n \in ((h+1)A)^{(t)} \setminus B^{(t)}$, then $0  \leq n \leq c_t -1$ 
or $(h+1) a_k - d_t + 1 \leq n \leq (h+1) a_k$.

If $n \in ((h+1)A)^{(t)} \setminus B^{(t)}$ and $n \leq c_t -1$, 
then 
\[
n \notin C_t = [0, c_t-1] \cap  (hA)^{(t)} 
\]  
and so $r_{A,h}(n) \leq t-1$.
However, $n \in ((h+1)A)^{(t)}$ means $r_{A,h+1}(n) \geq t$.  
Therefore, $n$ has at least $t$ representations as the sum of $h+1$ elements of A, 
but at most $t-1$ representations as the sum of $h$ elements of A.  
It follows that $n$ has at least one representation 
as the sum of $h+1$ positive elements of A, and so
\[
n \leq c_t -1\leq c'_t -1 \leq h_t  \leq  h < (h+1)a_1 \leq n
\]
which is absurd.  Therefore, if $n \in ((h+1)A)^{(t)}$ and $n < c_t$, 
then $n \in C_t \subseteq B^{(t)}$.

If $n \in ((h+1)A)^{(t)} \setminus B^{(t)}$ and $n \geq (h+1) a_k - d_t + 1$, then  
\[
n \notin (h+1)a_k - D_t
\]
and so 
\[
n-a_k \notin  ha_k - D_t = [ha_k- d_t+1, ha_k] \cap (hA)^{(t)}.
\]
Therefore, $r_{A,h}(n - a_k) \leq t-1$.  
However, $n \in ((h+1)A)^{(t)}$ implies that $r_{A,h+1}(n) \geq t$,  
and so there is at least one representation of $n = a_{i_1} + \cdots + a_{i_{h+1}}$ 
with  $a_{i_j} \leq a_{k-1}$ for all $j \in [1,h+1]$.    
It follows  that 
\[
(h+1)a_k - d_t +1 \leq n \leq (h+1) a_{k-1} \leq (h+1) (a_k-1)
\]
and so 
\[
h_t \leq h \leq d_t - 2 \leq d'_t -2 =  h_t -3 
\]
which is absurd.  
Therefore, 
\[
n \in (h+1)a_k - D_t \subseteq B^{(t)}.
\]
It follows that $(h+1)A)^{(t)} = B^{(t)}$.  
This completes the proof.  
\end{proof}

If $A$ is a finite set of integers with $\min(A) = 0$ and $\gcd(A) = 1$,   
then $FN_t(A) = c_t-1$ is the largest integer that does not have $t$
representations as the sum of elements of $A$.  
Equivalently,  $r_{A,h}(c_t-1) < t$ for all $h \geq 1$.  
We have the increasing sequence 
\[
FN_1(A)  \leq \cdots \leq FN_t(A)  \leq FN_{t+1}(A) \leq \cdots. 
\]
There is no efficient algorithm to compute the numbers $FN_t(A)$, 
and very little is known about them.

\section{Symmetry.}
 Let $A = \{a_0,a_1,\ldots, a_k \}$ be a finite set of integers  with 
\[
0 = a_0 < a_1 < \cdots < a_k.
\]
The \textit{dual set} 
\[
A^* =  \max(A) - A = \{ a_k - a_j: j \in [0,k] \} 
\]
satisfies $\left(A^*\right)^* = A$ and $\gcd(A) = \gcd\left( A^* \right)$. 
Because $ha_k = \max(hA) = \max(hA^*)$, we have 
\[
n = \sum_{j=1}^h a_{i_j} \in hA
\]
if and only if 
\[
ha_k - n = \sum_{j=1}^h \left( a_k- a_{i_j} \right) \in hA^*.
\]
Thus, $(hA)^* = hA^*$.  Similarly, 
\[
 \left( \left(hA\right)^{(t)} \right)^* =  \left(hA^*\right)^{(t)}   
\]
for all positive integers $h$ and $t$.  It follows that if 
\[
(hA)^{(t)}  = C_t  \cup [c_t,ha_{k} -d_t] \cup \left(ha_{k}  - D_t \right), 
\] 
then 
\begin{align*}
\left(hA^*\right)^{(t)} & = \left( \left(hA\right)^{(t)} \right)^*  \\
& = \left(  C_t  \cup [c_t,ha_{k} - d_t] \cup \left(ha_{k}  - D_t \right)\right)^* \\
& =  D_t  \cup [d_t,ha_{k} - c_t] \cup \left(ha_{k}  - C_t \right). 
\end{align*}
If $A = A^*$, then $c_t = d_t$ and $C_t = D_t$.

\section{Proofs of the lemmas.}                 \label{FiniteSets:section:LemmaProofs} 
Now we prove Lemmas~\ref{FiniteSets:lemma:inclusion} and~\ref{FiniteSets:lemma:CD-1} 
from Section~\ref{FiniteSets:section:TheoremProof}.

\begin{proof}[Proof of  Lemma~\ref{FiniteSets:lemma:inclusion}.]
Let $n \in (hA)^{(t)}$.  
Because  $r_{A,h}(n) \geq t$,  for $s \in [1,t]$ 
there are distinct sequences $(u_{a,s})_{a\in A}$ of nonnegative integers 
that  satisfy 
\[
\sum_{a\in  A} u_{a,s}  a = n 
\qqand
\sum_{a\in  A } u_{a,s} = h. 
\]
For all $a, a' \in A$, let 
\[
u'_{a,s} = \begin{cases}
u_{a,s} & \text{if $a\neq a'$} \\
u_{a,s} + 1 & \text{if $a = a'$.} 
\end{cases}
\]
The sequences $(u'_{a,s})_{a\in A}$ are also distinct for $s \in [1,t]$, and satisfy 
\[
\sum_{a\in  A } u'_{a,s}  a = n + a' 
\qqand
\sum_{a\in  A} u'_{a,s} =  h + 1. 
\]
It follows that $r_{A,h+1}(n+a') \geq t$, and so $(hA)^{(t)} + a' \subseteq ((h+1)A)^{(t)}$ 
for all $a' \in A$.  This completes the proof.  
\end{proof}

\begin{proof}[Proof of Lemma~\ref{FiniteSets:lemma:CD-1}.]

If $ha_k < c'_t + d'_t$, then the interval $[c'_t, h a_{k}  - d'_t]$ is empty 
and~\eqref{FiniteSets:CDinterval-0} is true.

Let $ha_k \geq c'_t + d'_t$ and
\[
n \in [c'_t, h a_{k}  - d'_t].  
\]
Because $\gcd(A) = \gcd(a_1,\ldots, a_{k}) = 1$,  there exist 
integers $x'_1,\ldots, x'_{k}$ such that 
\[
n = \sum_{j=1}^{k} x'_j a_j
\]
and so 
\[
n \equiv \sum_{j=1}^{k-1} x'_j a_j \pmod{a_{k}}. 
\]
For all integers $s$, the interval $[(s-1)a_{k}, sa_{k}-1]$ is a complete set of 
representatives for the congruence classes modulo $a_{k}$.  
It follows that, for all $j \in [1, k-1]$ and $s \in [1,t]$, there exist unique integers 
\beq                                                    \label{FiniteSets:CD-cong}
x_{j,s} \in [(s-1)a_{k}, sa_{k}-1] 
\eeq
such that 
\[
x'_j \equiv x_{j,s} \pmod{a_{k}}.
\]
Therefore,  
\[
n \equiv \sum_{j=1}^{k-1} x_{j,s} a_j \pmod{a_{k}}.
\]
There is a unique integer $x_{k,s}$ such that 
\beq                                                    \label{FiniteSets:CD}
n = \sum_{j=1}^{k} x_{j,s} a_j .
\eeq
The inequality
\[
\sum_{j=1}^{k-1} x_{j,s} a_j \leq  \sum_{j=1}^{k-1} (sa_{k}-1) a_j 
\leq  (ta_{k}-1)\sum_{j=1}^{k-1}  a_j = c'_t \leq n 
\]
implies   
\[
x_{k,s}a_{k} = n - \sum_{j=1}^{k-1} x_{j,s} a_j \geq 0.  
\]
Thus, $x_{k,s} \geq 0$ for all  $s \in [1,t]$, 
and so~\eqref{FiniteSets:CD} is a nonnegative 
integral linear combination of elements of $A$. 

We have    
\[
x_{k,s}a_{k} \leq  n \leq h a_{k}  - d'_t = h a_{k}  - (k-1)(t a_{k} -1)a_{k} 
\]
and so 
\[
x_{k,s} \leq h  - (k-1)(t a_{k} -1). 
\]
Therefore, 
\begin{align*}
\sum_{i=1}^{k} x_{i,s} & = \sum_{i=1}^{k-1} x_{i,s} + x_{k,s} \\
&  \leq (k-1)(s a_{k} -1) +  h  - (k-1)(t a_{k} -1) \\
& = h - (k-1)(t-s)a_k \\ 
& \leq h
\end{align*}
and $n \in hA$.  
It follows from~\eqref{FiniteSets:CD-cong} that, for $s \in [1,t]$,  the $k$-tuples 
\[
\left( x_{1,s}, x_{2,s}, \ldots, x_{k-1,s}, x_{k,s} \right)
\]
are distinct, and so the representations~\eqref{FiniteSets:CD} 
are distinct.  Therefore, $r_{A,h}(n) \geq t$ and 
\[
[c'_t, h a_{k}  - d'_t] \subseteq (h A)^{(t)}.
\]
This proves Lemma~\ref{FiniteSets:lemma:CD-1}. 
\end{proof}

\def\cprime{$'$} \def\cprime{$'$} \def\cprime{$'$}
\providecommand{\bysame}{\leavevmode\hbox to3em{\hrulefill}\thinspace}
\providecommand{\MR}{\relax\ifhmode\unskip\space\fi MR }
\providecommand{\MRhref}[2]{%
  \href{http://www.ams.org/mathscinet-getitem?mr=#1}{#2}
}
\providecommand{\href}[2]{#2}

\end{document}